\newtheorem{tm}{Theorem}
\newtheorem{defi}{Definition}
\newtheorem{rem}{Remark}
\newtheorem{rems}{Remarks}
\newtheorem{lm}{Lemma}
\newtheorem{ex}{Example}
\newtheorem{prop}{Proposition}
\newtheorem{nota}{Notation}
\newtheorem{prob}{Problem}
\begin{document}
\title{Three realization problems about univariate polynomials}
\author{Vladimir Petrov Kostov}
\address{Universit\'e C\^ote d’Azur, CNRS, LJAD, France}
\email{vladimir.kostov@unice.fr}
\maketitle 

\begin{abstract}
  We consider three realization problems about monic real univariate polynomials
  without vanishing coefficients. Such a polynomial $P:=\sum_{j=0}^db_jx^j$
  defines the sign pattern $\sigma (P):=({\rm sgn}(b_d)$, $\ldots$,
  ${\rm sgn}(b_0))$. The numbers $p_d$ and $n_d$ of positive and negative roots
  of $P$ (counted with multiplicity) satisfy the Descartes' rule of signs.
  Problem~1 asks for which couples $C$ of the form
  (sign pattern $\sigma$, pair $(p_d,n_d)$ compatible
  with $\sigma$ in the sense of Descartes' rule of signs), there exist polynomials $P$ defining these
  couples. Problem~2 asks for which $d$-tuples of pairs $T:=((p_d,n_d)$,
  $\ldots$, $(p_1,n_1))$, there exist polynomials $P$ such that $P^{(d-j)}$ has
  $p_j$ positive and $n_j$ negative roots. A $d$-tuple $T$ determines the sign
  pattern $\sigma (P)$, but the inverse is false. We show by an example that
  $6$ is the smallest value of $d$ for which there exist non-realizable tuples
  $T$ for which the corresponding couples $C$ are realizable. The third
  problem concerns polynomials with all roots real. We give a geometric
  interpretation of the three problems in the context of degree $4$
  polynomials.\\  

{\bf Key words:} real polynomial in one variable; hyperbolic polynomial;
  sign pattern; Descartes' rule of signs\\

{\bf AMS classification:} 26C10; 30C15 
\end{abstract}

\section{Introduction}

The present paper concerns three realization problems inspired by
{\em Descartes' rule of signs}
about real univariate polynomials, see \cite{Ca}, \cite{Cu}, \cite{DG},
\cite{Des}, \cite{Fo}, \cite{Ga}, \cite{J}, \cite{La} or \cite{Mes}.
For a degree $d$ monic polynomial
\begin{equation}\label{equP}P:=x^d+\sum_{j=0}^{d-1}b_jx^j~,~~~\, b_j\in \mathbb{R}~,\end{equation}
one knows that the number
$\ell_+$ of its positive roots counted with multiplicity is bounded
by the number $\tilde{c}$ of sign
changes in the sequence of its coefficients the difference $\tilde{c}-\ell_+$
being even. When counting sign changes, zero coefficients are ignored. 
If $P$ has no vanishing coefficients, then for the number
$\ell_-$ of negative roots and the number of sign preservations
$d-\tilde{c}$ one obtains similar conditions by considering the polynomial
$P(-x)$. Thus when $b_j\neq 0$, $j=0$, $\ldots$, $d-1$, 
Descartes' rule of signs implies the following restrictions:

\begin{equation}\label{equDescartes}
  \begin{array}{llc}\ell_+\leq \tilde{c}~,&
    \tilde{c}-\ell_+\in 2\mathbb{N}\cup 0~,&{\rm sgn}(b_0)=(-1)^{\ell_+}~,\\ \\  
    \ell_-\leq d-\tilde{c}~,&d-\tilde{c}-\ell_-\in 2\mathbb{N}\cup 0~.
  \end{array}
  \end{equation}
These conditions are only necessary. A.~Albouy and Y.~Fu have formulated
sufficient conditions as well (see~\cite{AlFu} and the references therein),
about a more general class of functions of the form

$$Y:=a_0x^{\alpha _0}+a_1x^{\alpha_1}+\cdots +a_nx^{\alpha_n}$$
in which $\alpha_0<\cdots <\alpha_n$ are arbitrary real exponents and $a_j$
are non-zero real numbers. Namely, for a prescribed set $E$ of positive numbers
with cardinality $\ell_+$ (repetitions are allowed), and 
for prescribed set of numbers $\alpha_j$ as above 
and choice of the signs of the coefficients $a_j$
with $\tilde{c}$ sign changes, where $\tilde{c}-\ell_+\in 2\mathbb{N}\cup 0$,
there exists a function $Y$ whose set of positive roots counted with
multiplicity is the set~$E$.

The sufficient conditions of Albouy and Fu concern only the positive roots.
However it would be natural to consider the positive and negative roots at
the same time. (In what follows we speak about polynomials $P$
without vanishing coefficients.)

\begin{defi}
  {\rm (1) A {\em sign pattern} of length $d+1$ is a sequence of $d+1$ signs
    $+$ and/or~$-$. We say that the polynomial $P$ defines the sign pattern
    $$\sigma (P)~:=~(~+~,~{\rm sgn}(b_{d-1})~,~ \ldots ~, ~{\rm sgn}(b_0)~)~.$$
    \vspace{1mm}
    
    (2) Given a sign pattern $\sigma$ with $\tilde{c}$ sign changes (and hence
    $d-\tilde{c}$ sign preservations) we define its {\em Descartes' pair}
    as $(\tilde{c},d-\tilde{c})$. A {\em compatible pair} (compatible with
    $\sigma$ in the sense of Descartes' rule of signs) is any pair}

    $$(\tilde{c}-2u~,~d-\tilde{c}-2v)~,~~~\, u,~v\in \mathbb{N}\cup 0~,
    ~~~\, \tilde{c}-2u\geq 0~,~~~\, d-\tilde{c}-2v\geq 0~.$$
    {\rm A couple
    (sign pattern, compatible pair) is called a {\em compatible couple}.
    \vspace{1mm}
    
    (3) A compatible couple $(\sigma_0,(\ell_+,\ell_-))$
    is {\em realizable} if there exists a polynomial
    $P$ with $\sigma (P)=\sigma_0$, with exactly $\ell_+$ positive and $\ell_-$
  negative distinct roots.}
\end{defi}

\begin{ex}
  {\rm For the sign pattern $\sigma_{\dagger}:=(+,-,-,-,+)$, its Descartes' pair
    is $(2,2)$ and its compatible pairs are $(2,2)$, $(2,0)$, $(0,2)$
    and~$(0,0)$.}
  \end{ex}

\begin{rem}\label{remDpair}
  {\rm Each compatible couple of the form (sign pattern, Descartes' pair)
    is realizable, see~\cite[Part~(1) of Remarks~2.2.3]{KoDeGr}.}
  \end{rem}
The first realization problem which we consider reads:
\begin{prob}\label{prob1}
  For each given degree $d$, determine which compatible couples are
  realizable.
\end{prob}

\begin{defi}\label{defiinvol}
  {\rm For a given degree $d$, the
    $\mathbb{Z}_2\times \mathbb{Z}_2$-{\em action} on the set of
    compatible couples is defined by the two commuting involutions}

  $$i_m:P(x)\mapsto (-1)^dP(-x)~~~\, {\rm and}~~~\,
  i_r:P(x)\mapsto x^dP(1/x)/P(0)~.$$
\end{defi}

\begin{rem}\label{reminvol}
  {\rm The involution $i_m$ exchanges the components of compatible pairs
    and changes the sign of every second coefficient of~$P$.
    Up to a simultaneous change of the components of the sign pattern,
    the involution $i_r$
    reads sign patterns from the right and preserves compatible pairs. The
    factors $(-1)^d$ and $1/P(0)$ are introduced in order to preserve the set
    of monic polynomials. Each {\em orbit} of the
    $\mathbb{Z}_2\times \mathbb{Z}_2$-action consists of $4$ or $2$ compatible
    couples. All compatible couples of a given orbit are simultaneously
    realizable or not. Orbits of length $2$ occur when the compatible couple
    is invariant under one of the involutions $i_r$ or $i_mi_r$; one has
  $i_m(\sigma )\neq \sigma$ for any sign pattern~$\sigma$.}
\end{rem}

\begin{nota}
  {\rm For a given degree $d$, we denote by $\Sigma_{m_1,m_2,\ldots ,m_n}$,
    $m_1+\cdots +m_n=d+1$, the sign pattern beginning by $m_1$ signs $+$
    followed by $m_2$ signs $-$ followed by $m_3$ signs $+$ etc.}
\end{nota}

For $d=1$, $2$ and $3$, all orbits are realizable, see~\cite[p.~17]{KoDeGr}. For
$d=4$ and $d=5$, the only non-realizable orbits are the ones of the
compatible couples

\begin{equation}\label{equ45}
  (\Sigma_{1,3,1},(0,2))~~~\, {\rm and}~~~\, (\Sigma_{1,4,1},(0,3))
  \end{equation}
respectively, see \cite{Gr} and \cite{AlFu}. For $d=6$, there are exactly $4$
such orbits, see~\cite{AlFu}:

\begin{equation}\label{equ4orbits}
  (\Sigma_{1,5,1},(0,2))~,~(\Sigma_{1,5,1},(0,4))~,~(\Sigma_{4,1,2},(2,0))~~~\,
  {\rm and}~~~\, (\Sigma_{2,4,1},(0,4))~.
\end{equation}
Problem~\ref{prob1} is completely solved for $d\leq 8$ and partial results
are obtained for higher degrees, see about Problem~A in~\cite{KoDeGr}.

To formulate the next problem we have to study
{\em sequences of compatible pairs (SCPs)}. 
For a degree $d$ polynomial $P$,
we denote by $p_j$ and $n_j$ the numbers of positive and negative roots of
$P^{(d-j)}$ respectively (counted with multiplicity). Hence in particular
$p_d=\ell_+$ and $n_d=\ell_-$. When all real roots of $P$ and of all its
non-constant derivatives are distinct, we say that $P$ {\em realizes}
the SCP
$$S_d~:=~((p_d,n_d)~,~(p_{d-1},n_{d-1})~,~\ldots ~,~(p_1,n_1))~.$$
Clearly
$(p_1,n_1)=(1,0)$ or $(0,1)$.

Rolle's theorem imposes the following restrictions on the numbers $p_j$
and~$n_j$:

\begin{equation}\label{equRolle}
  p_j\leq p_{j-1}+1~,~~~\, n_j\leq n_{j-1}+1~~~\, {\rm and}~~~\,
  p_j+n_j\leq p_{j-1}+n_{j-1}+1~.
  \end{equation}

\begin{rem}
  {\rm The SCP $S_d$ determines the sign pattern $\sigma (P)$ via the rule
    sgn$(b_j)=(-1)^{p_{d-j}}$. However one and the same sign pattern beginning with $+$ can
    be determined by several SCPs. Example: the SCPs}

  $$((0,1),(2,0),(1,0))~~~\, {\rm and}~~~\, ((0,1),(0,0),(1,0))$$
  {\rm determine the sign pattern $(+,-,+,+)=\Sigma_{1,1,2}$.}
  \end{rem}

The next realization problem reads:

\begin{prob}\label{prob2}
  For a given degree $d$, determine which SCPs are realizable.
\end{prob}

\begin{rem}
  {\rm In the context of Problem~\ref{prob2} we define the orbits of SCPs
    only w.r.t. the involution $i_m$, not w.r.t. both $i_m$ and $i_r$, see
    Definition~\ref{defiinvol}. This is because when the polynomial
    $P$ is differentiated, it loses coefficients from its right end.
    Thus all orbits in the context of Problem~\ref{prob2} are
    of length~$2$.}
  \end{rem}

For $d=1$, $2$ and $3$, it is easy to check that all SCPs are realizable. For $d=4$ and $5$
(see \cite{CGK}),
non-realizable are exactly these SCPs whose non-realizability can be deduced
from the non-realizability of compatible couples. We explain this in detail.
For $d=4$, the non-realizable compatible couple

$$C_1:=((+,-,-,-,+),(0,2))$$
can correspond to a single SCP, namely, to
$S^*:=((0,2),(1,2),(1,1),(1,0))$. Indeed, applying conditions (\ref{equRolle})
one finds that $n_3\geq 1$ while $p_3=p_2=p_1=1$, see (\ref{equDescartes}). Hence
$n_3=2$, $n_2=1$ and $n_1=0$. The non-realizability of $C_1$ implies
the one of $S^*$. On the other hand up to the involution
$i_m$, the only non-realizable SCP is~$S^*$,
see~\cite{CGK}.

In the same way, for $n=5$, the non-realizable compatible couple

$$C_2:=((+,-,-,-,-,+),(0,3))$$
corresponds only to the SCP 
$S^{**}:=((0,3),(1,3),(1,2),(1,1),(1,0))$. Hence the SCP $S^{**}$ is not
realizable. One needs to take into account the following fact:

\begin{prop}
  If the truncated SCP
  $S_{d-1}:=((p_{d-1},n_{d-1})~,~(p_{d-2},n_{d-2}),\ldots$, $(p_1,n_1))$ is non-realizable, then the SCP $S_d$ is also non-realizable.
  \end{prop}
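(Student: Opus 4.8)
The plan is to argue by contraposition: I will assume that the full SCP $S_d$ is realizable and deduce that the truncated SCP $S_{d-1}$ is realizable as well, which is exactly the desired implication.

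First, let $P$ be a monic polynomial of degree $d$ realizing $S_d$, and set $Q:=P'/d$. Then $Q$ is again \emph{monic}, of degree $d-1$, and since differentiation commutes with itself one has $Q^{((d-1)-j)}=P^{(d-j)}/d$ for every $j$ with $1\leq j\leq d-1$. Multiplication by the positive constant $1/d$ leaves the root set (and the multiplicities, and the signs of the roots) unchanged, so for each such $j$ the polynomial $Q^{((d-1)-j)}$ has exactly $p_j$ positive and $n_j$ negative roots counted with multiplicity. Thus the sequence of compatible pairs attached to $Q$ is precisely $S_{d-1}=((p_{d-1},n_{d-1}),(p_{d-2},n_{d-2}),\ldots,(p_1,n_1))$.

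Second, I must check the genericity clause in the definition of ``realizes''. The non-constant derivatives of $Q$ are $Q,Q',\ldots,Q^{(d-2)}$, which up to multiplication by positive constants are $P',P'',\ldots,P^{(d-1)}$, i.e.\ a subfamily of the non-constant derivatives of $P$. Since $P$ realizes $S_d$, all real roots of $P$ and of all its non-constant derivatives are distinct; a fortiori the same holds for $Q$ and all its non-constant derivatives. Hence $Q$ realizes $S_{d-1}$, contradicting the assumed non-realizability of $S_{d-1}$, and the proposition follows.

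The statement is essentially immediate once one passes to the derivative; the only two points deserving a word are keeping the truncated polynomial monic (handled by the harmless rescaling $P'\mapsto P'/d$) and noting that the ``all roots distinct'' condition for $S_d$ automatically restricts to the sublist of higher derivatives relevant to $S_{d-1}$. Neither is a genuine obstacle, so I expect no technical difficulty in the write-up.
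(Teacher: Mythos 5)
Your argument is correct and is essentially the paper's own proof: the paper also passes to the derivative $P'$ of a polynomial $P$ realizing $S_d$ to realize $S_{d-1}$, and your extra care (rescaling $P'/d$ to stay monic, checking that the distinct-roots condition restricts to the higher derivatives) just makes explicit what the paper leaves implicit. Nothing further is needed.
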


Indeed, if the SCP $S_d$ is realizable by the polynomial $P$,
then the SCP $S_{d-1}$ is realizable by its derivative $P'$.

Thus to obtain the list of all SCPs whose non-realizability follows from
the one of compatible couples, one has to add the SCPs $S_5$ such that
the corresponding SCPs $S_4$ equal $S^*$. These are (up to the involution~$i_m$)

$$((p_5,n_5),(0,2),(1,2),(1,1),(1,0))~,~~~\, (p_5,n_5)=(1,2),~(1,0),~(0,3)~~~\,
{\rm or}~~~\, (0,1)~.$$
In the present paper we show that for $d=6$, there exist non-realizable SCPs
whose non-realizability cannot be deduced from the non-realizability of
compatible couples for $d\leq 6$.

\begin{tm}\label{tmmain}
  For $d=6$, the SCP $S^{\diamond}:=((0,2),(2,3),(1,3),(1,2),(1,1),(1,0))$
  (defining the
  sign pattern $\Sigma_{1,4,2}$) is not realizable.
  \end{tm}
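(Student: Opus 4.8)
The plan is to argue by contradiction: suppose a monic degree $6$ polynomial $P$ realizes $S^{\diamond}$, so $P$ has exactly $2$ negative and $0$ positive roots, $P'$ has $2$ positive and $3$ negative roots, $P''$ has $1$ positive and $3$ negative roots, and the remaining derivatives $P''',P^{(4)},P^{(5)}$ have the root counts $(1,2),(1,1),(1,0)$. The sign pattern is $\Sigma_{1,4,2}=(+,-,-,-,-,+,+)$, so $b_5,b_4,b_3,b_2<0$ and $b_1,b_0>0$. First I would normalize: by the substitution $x\mapsto \lambda x$ with $\lambda>0$ we may place one of the roots conveniently, and it is harmless to assume all roots of $P$ and of all its derivatives are simple (that is part of the definition of realizing an SCP). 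The key structural observation I would try to exploit is the very asymmetric root distribution forced by Rolle combined with Descartes: $P$ itself has \emph{no} positive root, so $P(x)>0$ for all $x\ge 0$; meanwhile $P'$ is required to have two positive roots. Between those two positive roots of $P'$, the polynomial $P$ has a local max followed by a local min (or vice versa), yet it never crosses zero there — so $P$ has a positive local minimum. This pins down the shape of $P$ on the positive axis quite rigidly.

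The next step is to convert this geometric picture into an inequality that the coefficients cannot satisfy. The natural tool is to look at $P$, $P'$, and perhaps $P''$ at the two positive critical points $0<\alpha_1<\alpha_2$ of $P$. At $\alpha_1$ and $\alpha_2$ we have $P'(\alpha_i)=0$ and $P(\alpha_i)>0$; since $P'$ has exactly these two positive roots and $P'$ has leading coefficient $6>0$, we get $P'<0$ on $(\alpha_1,\alpha_2)$ and $P'>0$ on $(\alpha_2,\infty)$ and on $(0,\alpha_1)$ — wait, $P'$ also has three negative roots, but on the positive axis the sign pattern of $P'$ is determined by these two roots: so $\alpha_1$ is a local max of $P$ and $\alpha_2$ a local min, with $P''(\alpha_1)<0<P''(\alpha_2)$. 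Now $P''$ is required to have exactly one positive root; that root must lie strictly between $\alpha_1$ and $\alpha_2$ (sign change of $P''$ there), and $P''$ has no other positive root. I would then bring in $P'''$ (one positive root), $P^{(4)}$ (one positive root), $P^{(5)}$ (one positive root, namely $-b_5/6>0$ automatically since $b_5<0$). The chain of "exactly one positive root" for the top derivatives, together with their signs at $0$ — note $P^{(k)}(0)=k!\,b_k$, which is negative for $k=2,3,4,5$ and positive for $k=0,1$ — gives a rigid sign table: each of $P^{(2)},\dots,P^{(5)}$ starts negative at $0$ and ends positive at $+\infty$, crossing zero exactly once.

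With this sign table in hand, the heart of the proof is a quantitative estimate. I would consider the unique positive root $\gamma$ of $P^{(5)}=720x-(\text{stuff})$... more usefully, I would integrate: since $P^{(k)}(0)<0$ for $k=2,3,4,5$ and each has a single positive sign change, the positive root of $P^{(k-1)}$ lies to the right of the positive root of $P^{(k)}$; in particular the single positive roots $\gamma_5<\gamma_4<\gamma_3$ of $P^{(5)},P^{(4)},P^{(3)}$, then the unique positive root $\beta$ of $P''$ satisfies $\beta>\gamma_3$, and $\alpha_1<\beta<\alpha_2$. The contradiction should come from comparing $P(\alpha_1)>0$ (local max positive) with an upper bound for $P$ obtained by writing $P(\alpha_1)=P(0)+\int_0^{\alpha_1}P'$; on $(0,\alpha_1)$ we have $P'>0$, but we also need to control $\int_{\alpha_1}^{\alpha_2}P'<0$ against $P(\alpha_2)>0$, i.e. $P(\alpha_2)=P(\alpha_1)+\int_{\alpha_1}^{\alpha_2}P'>0$ forces $\int_{\alpha_1}^{\alpha_2}|P'|<P(\alpha_1)$. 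Meanwhile I would bound $P(\alpha_1)$ from above in terms of $b_0=P(0)$ and the behaviour of $P'$, and bound $\int_{\alpha_1}^{\alpha_2}|P'|$ from below using convexity/concavity data ($P''<0$ then $>0$ across $\beta$, with $P''$ controlled by its own single-sign-change structure), deriving a numerical inequality among the $b_j$ that is incompatible with all of them having the prescribed signs. The main obstacle — and the step I expect to be delicate — is making this integral comparison tight enough: one is essentially proving that a degree $6$ curve cannot simultaneously have its graph dip down to a strictly positive minimum between two positive critical points \emph{and} have all the prescribed numbers of negative roots and derivative roots, and squeezing a genuine contradiction out of it will likely require either a clever auxiliary polynomial (e.g. a suitable linear combination $P''(x)$ vs. $xP'(x)$ or $\deg$-reducing identity) or an explicit parametrization of the boundary of the realizable region and a perturbation/degeneration argument showing the would-be realizing polynomials form an empty (or non-open) set. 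I would first attempt the auxiliary-polynomial route, as it tends to give the cleanest contradictions in problems of this Descartes-type.
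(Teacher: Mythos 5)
Your qualitative set-up is correct as far as it goes ($P>0$ on $[0,\infty)$ with a positive local maximum and minimum at $\alpha_1<\alpha_2$; each of $P^{(5)},\dots,P''$ negative at $0$ with a single positive root, these roots moving to the right as the order of derivation decreases), but the proof stops exactly where the theorem begins: the quantitative contradiction is never produced, only announced as something one "would attempt" via an auxiliary polynomial or a parametrization. That missing step is the whole content of the statement, and it cannot be supplied at the level of bookkeeping you use. Indeed, the compatible couple $(\Sigma_{1,4,2},(0,2))$ is realizable for $d=6$ (it does not lie in the orbits of the four couples in (\ref{equ4orbits})), and the truncation $S^{\diamond}_*=((2,3),(1,3),(1,2),(1,1),(1,0))$ is realizable as well (Remarks~\ref{remsS}); so the sign-plus-Rolle picture on the positive half-axis, which is all your table records, is internally consistent and consistent with degree $6$ data, and no contradiction can be squeezed out of it without genuinely metric information about the roots.

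The structural inputs the paper uses, and which your plan lacks, are the following. Since $(p_5,n_5)=(2,3)$, the derivative $P'$ is strictly hyperbolic of degree $5$ with sign pattern $\Sigma_{1,4,1}$, and this pattern is canonical (Theorem~\ref{tmcanon}); hence the order of the moduli of the roots of $P'$ is forced: writing $P'$, after normalization, as $(x+1)(x+a)(x+b)(x-f)(x-g)$ with $-1<-a<-b<0<f<g$, one gets $f<b$ and $g>1$, and then $g>1+a+(b-f)>1+a$ from the sign of the coefficient of $x^4$, i.e. the inequalities (\ref{equgf}). One then examines the five critical levels of the primitive $M(x)=\int_{-1}^xP'(t)\,dt$: the case $\ell(\xi_5)>\max(\ell(\xi_1),\ell(\xi_3))$ is excluded because it would realize a non-realizable couple with pair $(0,4)$, and the remaining case is killed by proving, via explicit (MAPLE-assisted) computation and monotonicity in $f$ and $g$ over the region $f\le b$, $g\ge 1+a$, that $M(g)<0=M(-1)$ and $M(g)<M(-b)$; consequently any integration constant $c$ for which $M+c$ has no positive roots makes $M+c>0$ everywhere, so $(p_6,n_6)=(0,2)$ is impossible. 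Your proposed estimates on $P$ over the positive axis alone do not see the hyperbolicity of $P'$, the canonical order of moduli, or the known $d=6$ non-realizability results, and without input of that strength the integral comparison you hope to make "tight enough" has nothing to feed on.
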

The theorem is proved in Section~\ref{secprtmmain}.

\begin{rems}\label{remsS}
  {\rm (1) It should be noticed
    that in the truncated SCP}
    $$S^{\diamond}_*~:=~((2,3),(1,3),(1,2),(1,1),(1,0))$$
{\rm the pair $(2,3)$ is the Descartes' pair, so the SCP $S^{\diamond}_*$ is
realizable, see Remark~\ref{remDpair}.
\vspace{1mm}

(2) There is a non-realizable compatible
couple with the sign pattern $\Sigma_{1,4,2}$,
but with another compatible pair. This is the couple $(\Sigma_{1,4,2},(0,4))$.
Indeed, the couple $(\Sigma_{2,4,1},(0,4))$ is not realizable,
see~(\ref{equ4orbits}), and one has $(\Sigma_{1,4,2},(0,4))=
i_r((\Sigma_{2,4,1},(0,4)))$, see Definition~\ref{defiinvol} and Remark~\ref{reminvol}. It would be
interesting to find an example of a non-realizable SCP which defines
a sign pattern not involved in a non-realizable compatible couple.}
\end{rems}

In the next section we
make some comments about Problems~\ref{prob1} and \ref{prob2}. In
Section~\ref{secDSHP} we formulate a third realization problem about univariate
polynomials and in Section~\ref{secgeom}
we illustrate the three problems by the geometry
of the space of degree~$4$ monic polynomials.

\section{Comments}

We denote by $F_d$ the number of degree $d$ SCPs and by
$E_d(m,n)$ the number 
of degree $d$ SCPs for which $(p_d,n_d)=(m,n)$. Hence

$$F_d=\sum E_d(m,n)~,~~~\, m,n\in \mathbb{N}\cup 0~,~~~\,
m+n\leq d~,~~~\, d-m-n\in 2(\mathbb{N}\cup 0)~.$$
The quantities $E_d(m,n)$ satisfy the following recurrence relation:

$$\begin{array}{ccll}
  E_d(m,n)&=&\sum E_{d-1}(m-1+\mu ,n-1+\nu )~,&
  \mu ,\nu \in \mathbb{N}\cup 0~,\\ \\ 
  &&m-1+\mu \geq 0~,&n-1+\nu \geq 0~,\\ \\
  &&d-1-(m-1+\mu )-(n-1+\nu )\in 2\mathbb{N}\cup 0~.\end{array}$$ 
The only SCPs for $d=1$ are $(1,0)$ and $(0,1)$, so $E_1(1,0)=E_1(0,1)=1$ and
$F_1=2$. Using the above relations one finds that 

$$\begin{array}{lcl}
  E_2(2,0)=E_1(1,0)=1~,&&E_2(0,2)=E_1(0,1)=1~,\\ \\ 
  E_2(1,1)=E_1(1,0)+E_1(0,1)=2~,&&E_2(0,0)=E_1(1,0)+E_1(0,1)=2~,
\end{array}$$
so $F_2=1+1+2+2=6$. Similarly one checks that $F_3=20$, $F_4=82$, $F_5=340$
and $F_6=1602$.
If one considers SCPs modulo the 
involution $i_m$, one obtains the numbers $F_d/2$ which equal $1$, $3$, $10$,
$41$, $170$, $801$, $\ldots$ (S). The ratios of two 
consecutive of these numbers are $3$, $3.33\ldots$, $4.1$, $4.14\ldots$,
$4.71\ldots$.

For the ratios of two consecutive numbers
of the latter sequence, it seems that the  
increasing is higher between an odd and the consecutive
even value of $d$ than vice versa (the first term $3$ corresponds to $d=2$). 
This can be explained by 
the fact that in the first case the possible number of complex conjugate
pairs of roots of the polynomial increases by $1$ 
whereas in the second case it remains the same.

We compare the sequence (S) with the sequence of numbers of orbits of
compatible couples (sign pattern, compatible pair) 
under the $\mathbb{Z}_2\times \mathbb{Z}_2$-action:
$1$, $3$, $6$, $19$, $36$, $97$, $\ldots$. It is clear that the sequence (S)
increases more rapidly than the latter sequence. This is why the exhaustive
answer to
Problem~\ref{prob2} is known only for $d\leq 5$ compared to $d\leq 8$
for Problem~\ref{prob1}.

\section{Discriminant sets and hyperbolic polynomials\protect\label{secDSHP}}

\subsection{Discriminant sets}

To illustrate geometrically the relationship between Problems~\ref{prob1} and
\ref{prob2} we need to introduce the notion of discriminant sets.

\begin{defi}
  {\rm (1) In
    the space $\mathbb{R}^d\cong Ob_0\cdots b_{d-1}$ we denote by $D_d=D_d(P)$
    the set
of values of $b^*:=(b_0,\ldots ,b_{d-1})$ for which the polynomial $P$ (see (\ref{equP}) has a
multiple real root. Hence $D_d=\Delta_d^1\setminus \Delta_d^2$, where
$\Delta_d^1=\{$Res$(P,P')=0\}$, i.~e. $\Delta_d^1$ is the
zero set of the determinant of the Sylvester matrix defined after the
polynomials $P$ and $P'$, and $\Delta_d^2$
is the set of values of $b^*$ for which
$P$ has a multiple complex conjugate pair of roots and
no multiple real root. The set $\Delta_d^2$ is empty for $d\leq 3$; one has
dim$\Delta_d^1=d-1$ and dim$\Delta_d^2=d-2$, see \cite[Remarks~5.1.1]{KoDeGr}.
\vspace{1mm}

(2) We set $D_{d,0}:=D_d$ and for $k\in \mathbb{N}$, $k\leq d-1$,
$D_{d,k}^0:=D_{d-k}(P^{(k)})$. Hence $D_{d,k}^0$ is a subset of the space
  $Ob_k\ldots b_{d-1}$. We set $D_{d,k}:=D_{d,k}^0\times Ob_0\ldots b_{k-1}$.}
\end{defi}

Consider the set

$$\tilde{P}_1:=\mathbb{R}^d\setminus (D_d\cup (\cup_{j=0}^{d-1}\{ b_j=0\}))~.$$
A compatible couple is realizable if and only if at least one component of
the set $\tilde{P}_1$ consists of polynomials defining the sign pattern and the
compatible pair of the couple. There are examples of compatible couples
to which correspond two or more components of the set $\tilde{P}_1$,
see \cite[Theorem~5.3.2]{KoDeGr}.

In the context of Problem~\ref{prob2} the analog
of $\tilde{P}_1$ is the set

$$\tilde{P}_2:=\mathbb{R}^d\setminus
((\cup_{k=0}^{d-1}D_{d,k})\cup (\cup_{j=0}^{d-1}\{ b_j=0\}))~.$$
In its definition participate more hypersurfaces than in the one of
the set $\tilde{P}_1$ which is the geometric explanation of the higher
number of SCPs compared to the number of compatible couples,
see the previous section.

\subsection{Hyperbolic polynomials}

\begin{defi}
  {\rm A real univariate polynomial is {\em hyperbolic}
    (resp. {\em strictly hyperbolic}) is all its roots are real (resp.
    real and distinct).}
\end{defi}

For a hyperbolic polynomial with all coefficients non-vanishing
we consider the string of the moduli of its roots (presumed all distinct)
on the real positive
half-line and we note the positions of the negative roots. This defines an
{\em order of moduli}. Example: if for $d=8$, the positive roots are
$x_1<x_2<x_3$ and the negative ones are $-y_1<\cdots <-y_5$, where

$$0~<~y_5~<~y_4~<~x_1~<~y_3~<~x_2~<~y_2~<~y_1~<~x_3~,$$
then we say that the roots define the order of moduli $NNPNPNNP$, i.~e. the
letters $P$ and $N$ denote the relative positions of the moduli of positive
and negative roots in the string.

\begin{defi}
  {\rm (1) A sign pattern of length $d+1$ defines a
    {\em change-preservation pattern} of length $d$ as
    follows: to every couple of consecutive signs $+,-$ or $-,+$ (resp.
    $+,+$ or $-,-$) one puts into
    correspondence the letter $c$ (resp.~$p$). Hence when one considers
    only sign patterns beginning with $+$, then the correspondence between
    sign patterns and change-preservation patterns is bijective.
    Example: for $d=7$,
    the sign pattern $(+,+,-,-,-,+,-,+)$ defines the
    change-preservation pattern $pcppccc$. 
    \vspace{1mm}

    (2) An order of moduli $\Omega$ is {\em compatible} with a
    change-preservation pattern $\pi$ (both of length $d$) if the
    number of letters $P$ (resp. $N$) in $\Omega$ is equal to the number of
    letters $c$ (resp. $p$) in~$\pi$.
    \vspace{1mm}

    (3) A given couple (change-preservation pattern,
    compatible order of moduli), or equivalently (sign pattern,
    compatible order of moduli), is {\em realizable}
    if there exists a hyperbolic polynomial with coefficients (all
    non-vanishing) defining the given pattern and with distinct
    non-zero moduli of
    its roots defining the given order.}
  \end{defi}

\begin{prob}\label{prob3}
  For a given degree $d$, which compatible couples (pattern, order of moduli)
  are realizable?
  \end{prob}

\begin{defi}
  {\rm 
    (1) Every change-preservation pattern defines its corresponding
    {\em canonical order of moduli} when the pattern is read from the right
    and to each letter $c$ (resp. $p$) one puts into correspondence a letter
    $P$ (resp. $N$). Example: to the  change-preservation pattern $pcppccc$
    corresponds the canonical order $PPPNNPN$.
    \vspace{1mm}

    (2) Each couple (pattern, corresponding canonical order) is compatible and realizable, see \cite[Proposition~4.1.1]{KoDeGr}. If for a given sign or change-preservation pattern $\sigma$,
    the only realizable couple is
    $(\sigma ,\Omega )$, where $\Omega$ is the canonical order of moduli
    corresponding to $\sigma$, then $\sigma$ is called a {\em canonical sign pattern}.}
\end{defi}

\begin{tm}[Theorem~4.2.1 in~\cite{KoDeGr}]\label{tmcanon}
  Canonical are exactly these sign patterns which contain neither of the four
  configurations of four consecutive signs

  $$(+,+,-,-)~,~~~\, (-,-,+,+)~,~~~\, (+,-,-,+)~~~\, {\rm or}~~~\,
  (-,+,+,-)~.$$
  (Hence no sign pattern of length $\leq 3$ is canonical.) Canonical are
  exactly these change-preservation patterns which contain no isolated sign
  change and no isolated sign preservation, i.~e.
  no configuration $pcp$ or~$cpc$.
  \end{tm}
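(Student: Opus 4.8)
My plan is to reduce the statement to a single equivalence and prove its two implications. First, the two displayed characterizations are literally the same: under the bijection between sign patterns beginning with $+$ and change-preservation patterns, the four forbidden blocks translate as $(+,+,-,-)$ and $(-,-,+,+)$ both giving $pcp$, and $(+,-,-,+)$ and $(-,+,+,-)$ both giving $cpc$; conversely every occurrence of $pcp$ or $cpc$ comes from exactly one of these four blocks of four consecutive signs. Hence it suffices to show that a sign pattern $\sigma$ is canonical if and only if its change-preservation pattern $\pi$ contains neither $pcp$ nor $cpc$. I would treat the two implications separately and dispose of the small cases $d\le 3$ by direct inspection.

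For the implication ``$\pi$ contains $pcp$ or $cpc$ $\Rightarrow$ $\sigma$ not canonical'' I would use a concatenation principle. If a hyperbolic polynomial factors as $P=P^{(1)}\cdots P^{(s)}$ with the moduli of the roots of the factors lying in well separated bands, the band of $P^{(1)}$ highest, then estimating the elementary symmetric functions band by band gives $\pi(P)=\pi(P^{(1)})\cdots\pi(P^{(s)})$ (highest band on the left) while the order of moduli is $\Omega(P^{(s)})\cdots\Omega(P^{(1)})$ (lowest band on the left), the two concatenations being compatible with the canonical-order rule. Now write $\pi=\alpha\,\mu\,\beta$ with $\mu$ the length $3$ block $pcp$ (resp. $cpc$), which is the change-preservation pattern of the monic degree $3$ sign pattern $\Sigma_{2,2}=(+,+,-,-)$ (resp. $\Sigma_{1,2,1}=(+,-,-,+)$). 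Each of these admits a non-canonical realization: $\Sigma_{2,2}$ is realized with order of moduli $NNP$ by $(x-a)(x+b)(x+c)$ with $b<c<a$ and $b+c>a$, and $\Sigma_{1,2,1}$ with order $PPN$ by $(x-a)(x-a')(x+b)$ with $a,a'<b$ and $a+a'>b$. Realizing the middle block in its non-canonical order and the outer blocks $\alpha,\beta$ canonically (Proposition~4.1.1 in \cite{KoDeGr}), in separated bands, produces a hyperbolic polynomial with sign pattern $\sigma$ whose order of moduli differs from the canonical one exactly on the middle block; hence $\sigma$ is not canonical.

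For the converse I would argue by contraposition, proving that a non-canonical $\sigma$ must contain $pcp$ or $cpc$. The geometric picture is that, within the set of hyperbolic polynomials with all coefficients nonzero and a fixed sign pattern, the order of moduli can change only across a modulus wall, where a positive and a negative root share their modulus, i.e. where the polynomial has a factor $x^2-a^2$ (at such a point those two roots are automatically adjacent in the modulus order). The technical heart is the Key Lemma: every hyperbolic polynomial $(x^2-a^2)Q(x)$ with all coefficients nonzero has a change-preservation pattern containing $pcp$ or $cpc$. Writing $Q=\sum q_jx^j$ gives $b_k=q_{k-2}-a^2q_k$, so the negative constant of the quadratic factor forces sign restrictions on the coefficients straddling the modulus level $a$; this is already visible in degree $3$, where $(x^2-a^2)(x-\rho)$ has sign pattern exactly $\Sigma_{1,2,1}$ (for $\rho>0$) or $\Sigma_{2,2}$ (for $\rho<0$), and I would propagate it by induction on $\deg Q$.

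Granting the Key Lemma, I would finish thus: if $\sigma$ avoids $pcp$ and $cpc$ but some realization has a non-canonical order, deform it, through hyperbolic polynomials with all coefficients nonzero, toward the canonical realization furnished by Proposition~4.1.1; since the two orders of moduli differ, the path must meet a modulus wall, and at the first such wall the polynomial is of the form $(x^2-a^2)Q$ carrying the sign pattern $\sigma$ with all coefficients nonzero, contradicting the Key Lemma. The main obstacle is precisely this deformation step: one must produce a path from a non-canonical realization to a wall along which no coefficient vanishes before the wall is reached and at which all coefficients remain nonzero, equivalently one must show that a non-canonical order can always be partially unwound by an opposite-sign adjacent transposition realized by crossing a modulus wall inside the fixed-sign-pattern region. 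Establishing this, together with the inductive step of the Key Lemma (tracking where the single extra sign change injected by the factor $x^2-a^2$ lands), is the core of the argument; the degree $3$ and degree $4$ computations indicate the mechanism and give the base of the induction.
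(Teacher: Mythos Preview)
This theorem is not proved in the present paper: it is quoted from \cite{KoDeGr} as Theorem~4.2.1 there and used as a black box. Consequently there is no proof in this paper against which to compare your attempt.

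As for your proposal itself: the reduction of the two displayed characterizations to one another is correct, and your argument for the direction ``$\pi$ contains $pcp$ or $cpc$ $\Rightarrow$ $\sigma$ not canonical'' via a band-separation/concatenation principle is a standard and sound line (the degree~$3$ non-canonical realizations you exhibit are valid). For the converse, however, you explicitly leave two steps open: the inductive proof of your Key Lemma (that any hyperbolic polynomial divisible by $x^2-a^2$ with all coefficients nonzero must have $pcp$ or $cpc$ in its change-preservation pattern), and the existence of a deformation from a non-canonical realization to a modulus wall that stays inside the given open orthant. Both are genuine and nontrivial. The deformation step in particular presupposes that the set of strictly hyperbolic polynomials with the prescribed sign pattern is path-connected (this is exactly the kind of contractibility result proved in \cite{KoGod107}, which the present paper cites for related purposes), and even granted connectivity one must still argue that a modulus wall is reached before any coefficient vanishes and that all coefficients remain nonzero at the wall itself. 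So your proposal is a plausible strategy, but by your own account it is an outline rather than a proof; a comparison with the actual argument would require consulting \cite{KoDeGr}.
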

    
According to the theorem the sign patterns $\Sigma_{1,3,1}$, $\Sigma_{1,4,1}$, $\Sigma_{1,5,1}$ and $\Sigma_{4,1,2}$ are canonical while $\Sigma_{2,4,1}$ is not, see (\ref{equ45}) and~(\ref{equ4orbits}).

\section{Geometric illustrations\protect\label{secgeom}}

In the present section we consider degree $4$ monic polynomials.
This is the first
degree in which the answer to Problem~\ref{prob1} is non-trivial. In the
space of such polynomials we study
the geometry of the domains corresponding to given compatible couples in
the two orthants defined by the sign patterns $\Sigma_{1,3,1}$ and
$\Sigma_{1,2,2}$. The first of them is canonical (see Theorem~\ref{tmcanon}). 
It is chosen in relationship
with the non-trivial answers to Problems~\ref{prob1} and~\ref{prob3}.
The geometric discussions are illustrated by Fig.~\ref{SAPdeg6}.

\subsection{Monic polynomials for $d=4$ with the sign pattern $\Sigma_{1,3,1}$}

We illustrate geometrically the relationship between Problems~\ref{prob1} and \ref{prob3}
by the example of the sign pattern $\Sigma_{1,3,1}$ in degree~$4$.
Any polynomial $Q_4:=x^4+\sum_{j=0}^3b_jx^j$ defining this sign pattern can be
identified with
a point of the open orthant $\mathcal{R}\subset \mathbb{R}^4$
defined by the signs of the coefficients~$b_j$. 

The discriminant set $D_4$ of the
family $Q_4$ partitions $\mathcal{R}$ into
three open subsets $\mathcal{R}_i$, $i=0$, $1$, $2$, where the polynomials from
the set $\mathcal{R}_i$ have exactly $i$ complex conjugate pairs of roots.
All polynomials from the set $\mathcal{R}_1$ have two positive distinct roots.
This follows from the answer to Problem~\ref{prob1} for $d=4$,
see (\ref{equ45}) or \cite{Gr} and \cite{AlFu}. The polynomials from the set $\mathcal{R}_2$ have two positive and two negative distinct roots.  

We set

$$\mathcal{R}_{i,i+1}:=(\overline{\mathcal{R}_i}\cap \overline{\mathcal{R}_{i+1}})
\cap \mathcal{R}~,~~~\, i=0,~1~.$$

\begin{tm}
 (1) The polynomials from $\mathcal{R}_{0,1}$
  have a double negative root and two distinct positive roots; the ones from
  $\mathcal{R}_{1,2}$ have a double positive root
  and a complex conjugate pair.
  \vspace{1mm}
  
  (2) The orthant $\mathcal{R}$ is a union
  $(\cup_{j=0}^3\mathcal{R}_j)\cup \mathcal{R}_{0,1}\cup \mathcal{R}_{1,2}$,
  where $\mathcal{R}_{0,1}$ and $\mathcal{R}_{1,2}$ are locally smooth analytic
  hypersurfaces in $\mathcal{R}$. All five sets are contractible and 
  two-by-two non-intersecting. 
  \end{tm}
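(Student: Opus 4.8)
The plan is to turn the whole statement into a description of the possible root–multiplicity patterns of a degree-$4$ monic polynomial lying on the discriminant inside the orthant $\mathcal{R}$, and to read the stratification off it. First note that $\mathcal{R}_{0,1}$ and $\mathcal{R}_{1,2}$ are contained in $D_4\cap\mathcal{R}$: a point of $\overline{\mathcal{R}_i}\cap\overline{\mathcal{R}_{i+1}}$ lies in none of the open sets $\mathcal{R}_0,\mathcal{R}_1,\mathcal{R}_2$, hence on the discriminant set. Moreover $\Delta_4^2\cap\mathcal{R}=\emptyset$, since a quartic with a double non-real pair equals $(x^2+\beta x+\gamma)^2$ with $\beta^2<4\gamma$, whose coefficient of $x^2$ is $\beta^2+2\gamma>0$, incompatible with $\Sigma_{1,3,1}$. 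Hence $D_4\cap\mathcal{R}=\Delta_4^1\cap\mathcal{R}$ and every polynomial of $\mathcal{R}_{0,1}\cup\mathcal{R}_{1,2}$ has a multiple real root.

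Next I would enumerate the root–multiplicity patterns of a monic quartic $P$ defining $\Sigma_{1,3,1}$ (so $b_3,b_2,b_1<0<b_0$) that has a multiple real root. If all four roots of $P$ are real, then by (\ref{equDescartes}) exactly two of them, counted with multiplicity, are positive and two negative, which already excludes a triple or a quadruple real root; it also excludes $(x-r)^2(x+s)^2$ (for then $b_3<0$ and $b_1<0$ force $s<r$ and $r<s$) and a double positive root with two simple negative roots (for then $b_3<0$ and $b_1<0$ force $2r>s+t$ and $r<2st/(s+t)\le(s+t)/2$); what remains is a double negative root together with two simple positive roots. If $P$ has a non-real pair $x^2+\beta x+\gamma$ and a real double root, that double root is positive, a double negative root being excluded because $b_3<0$, $b_1<0$ and $\beta^2<4\gamma$ are jointly contradictory. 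Thus the only surviving configurations are
$$(x+s)^2(x-u)(x-v),\qquad s,u,v>0,\ u\ne v,$$
and
$$(x-r)^2(x^2+\beta x+\gamma),\qquad r>0,\ \beta^2<4\gamma,$$
each with exactly one double real root and all other roots simple; a short check (e.g. $(x-1)^2(x^2+\frac{3}{2}x+1)=x^4-\frac{1}{2}x^3-x^2-\frac{1}{2}x+1$ for the second) shows both occur with sign pattern $\Sigma_{1,3,1}$.

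I would then identify the first configuration with $\mathcal{R}_{0,1}$ and the second with $\mathcal{R}_{1,2}$, which yields part~(1) and most of part~(2). If $P_n\to P$ with $P_n\in\mathcal{R}_2$, each $P_n$ has two non-real conjugate pairs, so a real root of $P$ is the common limit of a conjugate pair and has even multiplicity; hence no polynomial of the first configuration (which has simple real roots) lies in $\overline{\mathcal{R}_2}$, and, since limits of quartics with four real roots have four real roots, no polynomial of the second configuration lies in $\overline{\mathcal{R}_0}$. As $D_4\cap\mathcal{R}$ is the union of the two configurations, this forces $\mathcal{R}_{0,1}$ and $\mathcal{R}_{1,2}$ to consist respectively of polynomials of the first and of the second; conversely, perturbing the unique double root of such a polynomial inside $\mathcal{R}$ turns it either into two simple real roots or into a non-real pair, landing in $\mathcal{R}_0$ resp. $\mathcal{R}_1$ in the first case and in $\mathcal{R}_1$ resp. $\mathcal{R}_2$ in the second, so each configuration is exactly the corresponding $\mathcal{R}_{i,i+1}$. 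A polynomial with a single double real root and no further multiple root is a smooth point of $\Delta_4^1$: writing it locally as $q\cdot g$, with $q$ the monic quadratic carrying the two colliding roots and $g$ the other monic quadratic, the factorization $P\mapsto(q,g)$ is an analytic isomorphism near this point (since ${\rm disc}(g)\ne0$ and ${\rm Res}(q,g)\ne0$) under which $\Delta_4^1$ corresponds to $\{{\rm disc}(q)=0\}$, a smooth analytic hypersurface. Hence $\Delta_4^1\cap\mathcal{R}$ is a smooth analytic hypersurface, $\mathcal{R}_{0,1}$ and $\mathcal{R}_{1,2}$ are its two pieces (separated by the type of the non-double roots), and together with $\mathcal{R}\setminus D_4=\mathcal{R}_0\sqcup\mathcal{R}_1\sqcup\mathcal{R}_2$ this gives the decomposition of $\mathcal{R}$ into five pairwise disjoint sets.

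There remains contractibility, which I expect to be the main obstacle. The two hypersurface pieces are handled directly: in root coordinates $\mathcal{R}_{0,1}=\{(u,v,s):u>v>0,\ 2uv/(u+v)<s<(u+v)/2\}$ and $\mathcal{R}_{1,2}=\{(r,\beta,\gamma):r>0,\ 2r/3<\beta<2r,\ \max(r\beta/2,\beta^2/4)<\gamma<2r\beta-r^2\}$, in each case the set of points lying between the graphs of two continuous functions over a convex base, hence contractible. For the three open strata I would use that each is invariant under the weighted scaling $b_j\mapsto t^{\,j-4}b_j$, $t>0$ (rescaling of all roots), whose orbits meet $\{b_0=1\}$ exactly once, so that the stratum is homeomorphic to its slice times $\mathbb{R}_{>0}$; and that $\Sigma_{1,3,1}$ is a canonical sign pattern (Theorem~\ref{tmcanon}), so a strictly hyperbolic quartic defining it has the single order of moduli $PNNP$ and $\mathcal{R}_0$ is parametrized by $0<u_2<s_2<s_1<u_1$ with no competing interleavings; one then exhibits each slice as a region between two continuous graphs over a planar base cut out by $b_3,b_2,b_1<0$, and checks that this base is connected and simply connected. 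This last verification — reducing the semialgebraic description of each open stratum to a graph-region over a contractible base — is where the real work lies; everything before it is sign bookkeeping.
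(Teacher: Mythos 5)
Most of your argument is sound and, for the two discriminant strata, essentially the paper's own: you classify the multiple-root configurations compatible with $\Sigma_{1,3,1}$ (the paper instead rules out roots of equal modulus and opposite signs and uses the canonical order of moduli $PNNP$ to decide which pair coalesces on each wall), you prove local smoothness by the local factorization $P\mapsto (q,g)$ (the paper cites its book), and you prove contractibility of $\mathcal{R}_{0,1}$ and $\mathcal{R}_{1,2}$ by exhibiting them as graph-regions fibered over a contractible base, exactly as the paper does with the parametrizations $(x+a/2)^2(x^2-fx+g)$ and $(x-f/2)^2(x^2+ax+b)$. One small omission: your root-coordinate description of $\mathcal{R}_{0,1}$ only encodes $b_3<0$ and $b_1<0$; you must also verify that $b_2<0$ is then automatic (it is: in your coordinates $s^2-2s(u+v)+uv<0$ holds on the stated interval, and the paper checks the analogous identity $a^2/4-af+g=a(a-f)/4+(g-af/4)-af/2<0$), since otherwise the set you write down could a priori be strictly larger than $\mathcal{R}_{0,1}$.

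The genuine gap is the contractibility of the three open strata $\mathcal{R}_0$, $\mathcal{R}_1$, $\mathcal{R}_2$, which is part of the statement. For these you only reduce by the root-scaling action to a three-dimensional slice and then announce a plan (write each slice as a region between two graphs over a connected and simply connected planar base), conceding that this is ``where the real work lies''; no such description is actually produced, and for $\mathcal{R}_1$ (two positive roots and one complex pair) and especially $\mathcal{R}_2$ (two complex pairs, where the order-of-moduli information you invoke says nothing) it is not at all evident how to obtain one. So as written the claim ``all five sets are contractible'' is proved only for two of the five. The paper does not do this computation either: it appeals to Theorem~1 of \cite{KoGod107}, which is exactly the statement that these sets are contractible. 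To complete your proof you should either cite that result or genuinely carry out the slice analysis for the three open strata; everything else in your proposal stands.
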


\begin{proof}

  Part (1) follows from the definition of the sets $\mathcal{R}_{i,i+1}$.

  Part (2). 
The discriminant set $D_4$ 
is everywhere in $\mathcal{R}$ a locally smooth analytic hypersurface; 
Theorem~5.5.1 in \cite{KoDeGr} proposes more details about this fact.
A point of $D_4$ corresponds to a polynomial with exactly one double
real root. To see this we check first that there are no two roots
of equal modulus and opposite signs. Indeed, if such roots exist, then the
polynomial is of the form

$$Q_4^*:=(x^2-a^2)(x^2-fx-g)=x^4-fx^3-(g+a^2)x^2+a^2fx+a^2g~,$$
$a>0$, $f>0$, $g>0$.
The signs of $f$ and $g$ result from the signs of the coefficients of $x^3$
and $Q_4^*(0)$, but this leads to the contradiction that the sign of the
coefficient of $x$ has to be positive.

The moduli of the negative roots of the polynomial are between the moduli
of its positive roots; this follows from the answer to Problem~\ref{prob3}. Hence if
a point of $\mathcal{R}$ follows a smooth path passing from the set
$\mathcal{R}_0$ into the set $\mathcal{R}_1$, then at the moment of crossing
the discriminant set, the two negative roots coalesce to form a complex
conjugate pair. And if the point passes from $\mathcal{R}_1$ into
$\mathcal{R}_2$, then when crossing the set $D_4$, the two
positive roots coalesce to form a second complex conjugate pair. Thus the sets
$\mathcal{R}_{0,1}$ and $\mathcal{R}_{1,2}$ do not intersect. The definition
of the sets $\mathcal{R}_j$ and $\mathcal{R}_{i,i+1}$, $i=0$, $1$, implies then
that all five sets do not intersect two-by-two.

The three sets
$\mathcal{R}_i$ are contractible, see \cite[Theorem~1]{KoGod107}. The set
$\mathcal{R}_{0,1}$ can be parametrised as follows:

$$\begin{array}{ccl}Q_4^{*-}&:=&(x+a/2)^2(x^2-fx+g)\\ \\ &=&
x^4+(a-f)x^3+(a^2/4-af+g)x^2+(ag-a^2f/4)x+a^2g/4~,\end{array}$$
where $a>0$, $f>0$ and $0<g<f^2/4$. The sign pattern $\Sigma_{1,3,1}$ requires
the inequalities $f>a$ and $g<af/4$, see the coefficients of $x^3$ and $x$.
One can observe that then 

$$a^2/4-af+g=a(a-f)/4+(g-af/4)-af/2<0~,$$
i.~e. the middle coefficient is also negative and

$$\mathcal{R}_{0,1}~=~\{ ~(a,f,g)~|~0<a~,~a<f~,~0<g<\min (f^2/4,af/4)~\}~,$$
so $\mathcal{R}_{0,1}$ is contractible. Indeed, this set is fibered over the
sector $\{ 0<a<f\} \subset Oaf$ and all fibres are non-empty.
To parametrise $\mathcal{R}_{1,2}$ we set

$$\begin{array}{ccl}Q_4^{*+}&:=&(x-f/2)^2(x^2+ax+b)\\ \\ &=&
x^4+(a-f)x^3+(f^2/4-af+b)x^2+(f^2a/4-fb)x+f^2b/4~,\end{array}$$
where $f>0$, $a^2/4<b$, $a<f$, $b>af/4$ and $f^2/4-af+b<0$.
In a similar way one obtains

$$\mathcal{R}_{1,2}~=~\{ ~(a,f,b)~|~0<a~,~a<f~,~af/4<b<af-f^2/4~\}~.$$
The inequalities $af/4<b<af-f^2/4$ define an interval if and only if
$af/4<af-f^2/4$, i.~e. $f/3<a$. Thus
$$\mathcal{R}_{1,2}~=~\{ ~(a,f,b)~|~0<f~,~f/3<a<f~,~af/4<b<af-f^2/4~\}~.$$
The set $\mathcal{R}_{1,2}$ is fibered over its projection
in the space $Oaf$ which is a sector, and the fibres are intervals. Hence the
set is contractible. The theorem is proved.
\end{proof}

\subsection{Monic polynomials for $d=4$ with the sign pattern $\Sigma_{1,2,2}$}

We denote by $\mathcal{R}^{\dagger}$ the orthant of
$\mathbb{R}^4\cong Ob_0b_1b_2b_3$ corresponding
to the sign pattern $\Sigma_{1,2,2}$. The subsets $\mathcal{R}^{\dagger}_i$ of
$\mathcal{R}^{\dagger}$ consist of polynomials $Q_4$ with $i$ pairs of
conjugate roots. We set

$$\mathcal{R}^{\dagger}_1=\mathcal{R}^{\dagger}_{1,+}\cup
\mathcal{R}^{\dagger}_{1,-}~,$$
where the polynomials from $\mathcal{R}^{\dagger}_{1,+}$ (resp.
$\mathcal{R}^{\dagger}_{1,-}$) have two positive (resp. two negative) real roots.

\begin{prop}
  Any polynomial from the set $\mathcal{R}^{\dagger}$ can be represented in the
  form

  $$T:=(x^2+ax+b)(x^2-fx+g)~,~~~\, a>0~,~b>0~,~f>0~,~g>0~.$$
\end{prop}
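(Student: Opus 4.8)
The plan is to show that a monic degree~$4$ polynomial $Q_4$ with the sign pattern $\Sigma_{1,2,2}=(+,+,-,-)$ factors over $\mathbb{R}$ into two real quadratic factors, and that the signs of their coefficients can be arranged as claimed. First I would recall that every monic real polynomial of degree~$4$ is a product of two monic real quadratic factors: either it has two complex conjugate pairs (two irreducible quadratics), or one conjugate pair and two real roots (group the two real roots into one quadratic), or all four roots real (pair them arbitrarily). So $Q_4=(x^2+px+q)(x^2+rx+s)$ with $p,q,r,s\in\mathbb{R}$. The real content of the claim is the sign information: writing $a:=p$, $b:=q$, $-f:=r$, $g:=s$, one must show one of the two factors can be taken with both middle and constant coefficient positive and the other with negative $x$-coefficient and positive constant term, i.e. $q>0$, $s>0$, $p>0$, $r<0$.

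The key step is to exploit the sign pattern. The constant term of $Q_4$ is $qs=b_0>0$, so $q$ and $s$ have the same sign. If both were negative, each quadratic factor $x^2+px+q$ with $q<0$ has a positive real root and a negative real root (product of roots negative), so $Q_4$ would have two positive and two negative roots; but then, ordering the factors, $Q_4$ would be a product $(x^2-\alpha^2)\cdot(\text{something})$-type configuration, and more to the point the coefficient of $x^2$ would be $q+s+pr$ and the coefficient of $x$ would be $ps+qr$; one checks that the sign pattern $(+,+,-,-)$ forces $b_1=ps+qr<0$ and $b_2=q+s+pr<0$ while $b_0=qs>0$ and $b_3=p+r>0$, and I would derive a contradiction from $q<0,s<0$ exactly as in the preceding subsection's argument ruling out roots of equal modulus and opposite sign (the ``$Q_4^*$'' computation): with $q,s<0$ the signs of $b_0,b_3$ pin down relations among $p,r,q,s$ that make $b_1$ come out positive, contradicting $b_1<0$. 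Hence $q>0$ and $s>0$. Then from $b_3=p+r>0$ at least one of $p,r$ is positive; after possibly swapping the two factors I may assume $p>0$, i.e. set $a=p>0$, $b=q>0$, $g=s>0$, and write the second factor as $x^2-fx+g$ with $f=-r$. It remains to check $f>0$, i.e. $r<0$: if instead $r\ge 0$ then both $p,r\ge 0$ and $p+r>0$, and with $q,s>0$ all four coefficients $p+r$, $q+s+pr$, $ps+qr$, $qs$ would be nonnegative, contradicting $b_2=q+s+pr<0$. Therefore $r<0$, $f>0$, and $T=(x^2+ax+b)(x^2-fx+g)$ with $a,b,f,g>0$ as asserted.

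The main obstacle is ruling out the case $q<0$, $s<0$ cleanly: unlike the fully symmetric picture, here I cannot immediately pair roots into a factor of the form $x^2-\alpha^2$, so I must instead argue directly from the four sign constraints on $(b_3,b_2,b_1,b_0)=(p+r,\,q+s+pr,\,ps+qr,\,qs)$. I expect this to reduce, after a short case split on the signs of $p$ and $r$, to the same elementary inequality manipulation used in the $\Sigma_{1,3,1}$ subsection. Everything else is a routine bookkeeping of which factor to call which.
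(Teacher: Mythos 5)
There is a genuine gap, and it sits exactly at the step you call the key one. You propose to take an arbitrary real factorization $Q_4=(x^2+px+q)(x^2+rx+s)$ and to deduce from the sign pattern alone that $q>0$ and $s>0$, by deriving a contradiction from $q<0$, $s<0$. No such contradiction exists: the polynomial $T_0:=(x-2)^2(x+1)^2=x^4-2x^3-3x^2+4x+4$ lies in $\mathcal{R}^{\dagger}$ and admits the factorization $T_0=(x^2-x-2)^2$, i.e. $p=r=-1$, $q=s=-2<0$, and all the coefficient sign conditions of $\Sigma_{1,2,2}$ are satisfied ($b_3=-2$, $b_2=-3$, $b_1=4$, $b_0=4$); the same example transformed by $i_m$, namely $(x^2+x-2)^2$, defeats the inequality in your own sign convention as well. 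The underlying reason is that having two positive and two negative roots is not incompatible with this orthant --- the hyperbolic polynomials of $\mathcal{R}^{\dagger}_0$ have precisely such roots, and when all four roots are real the grouping into two quadratic factors is not unique. So the statement you need is existential, not universal: \emph{some} factorization has the required form, namely the one grouping the two roots with negative real part and the two roots with positive real part. To obtain it you must show that every polynomial of $\mathcal{R}^{\dagger}$ has exactly two roots in each open half-plane and none on the imaginary axis; sign bookkeeping on an arbitrary factorization cannot deliver this.

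That is what the paper's proof does: a purely imaginary pair is excluded by the short computation $(x^2+b)(x^2\pm fx+g)=x^4\pm fx^3+(b+g)x^2\pm bfx+bg$, whose coefficient of $x^2$ would be positive, against $\Sigma_{1,2,2}$; a zero root is excluded because $b_0\neq 0$; and then, the orthant being connected, a continuous deformation from $T_0$ to any other polynomial of $\mathcal{R}^{\dagger}$ never lets a root cross the imaginary axis, so the count of two roots in each open half-plane persists and the desired grouping exists everywhere. A secondary point: you misread the sign pattern. For $d=4$, $\Sigma_{1,2,2}$ is the length-five pattern $(+,-,-,+,+)$, i.e. $b_3<0$, $b_2<0$, $b_1>0$, $b_0>0$; your constraints $b_3>0$, $b_1<0$ describe the $i_m$-image orthant. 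That slip alone would merely swap the roles of the two factors, but the main argument fails in either convention.
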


\begin{proof}
  We show first that no polynomial from $\mathcal{R}^{\dagger}$ has a pair of
  purely imaginary conjugate roots. If this were so, then the polynomial
  would equal

  $$(x^2+b)(x^2\pm fx+g)=x^4\pm fx^3+(b+g)x^2\pm bfx+bg~,~~~\, b>0~,~f>0~.$$
  According to the sign pattern $\Sigma_{1,2,2}$, the sign of $g$
  must be positive. But then the coefficient of $x^2$ is positive which is a
  contradiction.

  The polynomial $T_0:=(x-2)^2(x+1)^2=x^4-2x^3-3x^2+4x+4$ belongs to the orthant
  $\mathcal{R}^{\dagger}$. Starting with $T_0$, one can deform it continuously
  into any other polynomial from $\mathcal{R}^{\dagger}$. By doing so one
  changes continously the real parts of the roots. For no root does this part
  become equal to~$0$. Hence any polynomial of the deformation is a product
  of two quadratic real polynomials, one with positive and one with negative
  real parts of its roots.
\end{proof}

\begin{prop}
  (1) The sets $\mathcal{R}^{\dagger}_0$, 
$\mathcal{R}^{\dagger}_{1,-}$, $\mathcal{R}^{\dagger}_{1,+}$ and 
  $\mathcal{R}^{\dagger}_2$ are non-empty and contractible.
  \vspace{1mm}
  
  (2) The subsets $\mathcal{L}_{\pm}$ of $\mathcal{R}^{\dagger}$ the polynomials
  from which have a double positive or negative root are locally smooth
  analytic sets of dimension~$3$.
\end{prop}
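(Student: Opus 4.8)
The plan is to exploit the product representation $T=(x^2+ax+b)(x^2-fx+g)$ with $a,b,f,g>0$ established in the previous proposition, and to work in the coordinates $(a,b,f,g)$ rather than in the $b_j$. First I would record the identity
\[
T=x^4+(a-f)x^3+(b+g-af)x^2+(ag-bf)x+bg,
\]
so that membership in the orthant $\mathcal{R}^{\dagger}$ corresponding to $\Sigma_{1,2,2}$ is the system of strict inequalities
\[
a-f<0,\qquad b+g-af<0,\qquad ag-bf>0,\qquad bg>0;
\]
the last is automatic, and I would note that $f>a>0$ together with $ag>bf$ force $g>b$. This semialgebraic description of $\mathcal{R}^{\dagger}$ in the $(a,b,f,g)$-chart is the working model for both parts.

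For part (1), the stratification by the number of conjugate pairs translates into sign conditions on the two discriminants $\delta_1:=a^2-4b$ and $\delta_2:=f^2-4g$: the factor $x^2+ax+b$ has a conjugate pair iff $\delta_1<0$ and two (necessarily negative, since $a>0$) real roots iff $\delta_1>0$, and symmetrically the factor $x^2-fx+g$ gives a conjugate pair iff $\delta_2<0$ and two positive roots iff $\delta_2>0$. Hence $\mathcal{R}^{\dagger}_0=\{\delta_1<0,\delta_2<0\}$, $\mathcal{R}^{\dagger}_{1,-}=\{\delta_1>0,\delta_2<0\}$, $\mathcal{R}^{\dagger}_{1,+}=\{\delta_1<0,\delta_2>0\}$ and $\mathcal{R}^{\dagger}_2=\{\delta_1>0,\delta_2>0\}$, each intersected with the orthant inequalities above. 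Non-emptiness is checked by exhibiting one polynomial of each type (e.g. $T_0=(x-2)^2(x+1)^2$ for $\mathcal{R}^{\dagger}_2$, and small perturbations moving one or both discriminants across zero). For contractibility I would fiber each stratum over the sector $\{0<a<f\}$ in the $Oaf$-plane: for fixed $(a,f)$ in this sector the remaining constraints on $(b,g)$ are $0<b$, $g>b$ (from $ag>bf$ one gets $g>(f/a)b$, and $f/a>1$), $b+g<af$, together with the relevant sign of $\delta_1,\delta_2$. Each such fiber is an intersection of half-planes and parabola-interiors in the $(b,g)$-plane that is convex along the $g$-direction once $b$ is fixed (the conditions $g>(f/a)b$, $g<af-b$, $g>f^2/4$ or $g<f^2/4$ are all linear in $g$; similarly $b<a^2/4$ or $b>a^2/4$ slices the $b$-range into an interval), so each fiber is either empty or contractible, and one checks it is non-empty over an open subsector by the same explicit-example argument. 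A fibration with contractible non-empty fibers over a contractible (convex) base has contractible total space.

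For part (2), the sets $\mathcal{L}_{\pm}$ are the loci where $T$ acquires a double real root. A double negative root can only come from the factor $x^2+ax+b$ (the other factor has positive sum of roots and positive product, hence no negative root), so $\mathcal{L}_{-}=\{\delta_1=a^2-4b=0\}\cap\mathcal{R}^{\dagger}$; similarly $\mathcal{L}_{+}=\{\delta_2=f^2-4g=0\}\cap\mathcal{R}^{\dagger}$. Here I must be slightly careful, because a double \emph{real} root could in principle also arise from the two quadratic factors sharing a common real root; but a common root of $x^2+ax+b$ and $x^2-fx+g$ would be a root of their difference $(a+f)x+(b-g)$, i.e. $x=(g-b)/(a+f)>0$, which cannot be a root of $x^2+ax+b$ (all of whose coefficients are positive), so no such coincidence occurs and the two descriptions of $\mathcal{L}_{\pm}$ are exhaustive. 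Then $\{\delta_1=0\}$ and $\{\delta_2=0\}$ are smooth hypersurfaces in $(a,b,f,g)$-space (their gradients $(2a,-4,0,0)$ and $(0,0,2f,-4)$ never vanish on the orthant), and intersecting with the open semialgebraic set $\mathcal{R}^{\dagger}$ keeps them locally smooth analytic of dimension $3$; transporting back along the analytic diffeomorphism $(a,b,f,g)\mapsto(b_0,b_1,b_2,b_3)$ (whose Jacobian is nonzero when $f>a$, as one checks from the coefficient formulas) preserves this. The main obstacle in the whole argument is the bookkeeping of part (1): one has to verify carefully that over the appropriate open subsector of $\{0<a<f\}$ each of the four fibers is simultaneously non-empty, and to describe the boundary parabola-arcs precisely enough to conclude the fiber is a topological cell; everything else is routine sign-chasing.
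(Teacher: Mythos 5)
Your argument is sound in substance but takes a genuinely different route from the paper's, in both parts. For part (1) the paper does not argue directly at all: non-emptiness comes from the known answer to Problem~1 for $d=4$ (every compatible couple with sign pattern $\Sigma_{1,2,2}$ is realizable, the only non-realizable orbit being that of $(\Sigma_{1,3,1},(0,2))$), and contractibility is quoted from \cite[Theorem~1]{KoGod107}. Your self-contained proof --- transporting the four strata into the $(a,b,f,g)$-chart, reading them off as the four sign combinations of $\delta_1=a^2-4b$, $\delta_2=f^2-4g$, and fibring over the sector $\{0<a<f\}$ with interval slices in $g$ and then in $b$ --- buys independence from the cited results at the cost of the bookkeeping you acknowledge; for the contraction you should use the explicit $\min/\max$ description of the interval endpoints (continuous functions on the base) rather than a general ``fibration with contractible fibres'' principle, since local triviality is nowhere checked --- this is exactly the level of argument the paper itself uses. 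For part (2) the paper parametrises $\mathcal{L}_-$ and $\mathcal{L}_+$ directly as $(x+a/2)^2(x^2-fx+g)$ and $(x^2+ax+b)(x-f/2)^2$ and exhibits them as sets fibred over plane sectors with interval fibres; your version --- $\mathcal{L}_-$ and $\mathcal{L}_+$ as the loci $\{\delta_1=0\}$ and $\{\delta_2=0\}$ inside the chart, with non-vanishing gradients, pulled back along the product map --- is equivalent and makes the smoothness claim more explicit, provided you justify that the product map is a diffeomorphism onto the orthant: its Jacobian is (up to sign) the resultant of the two quadratic factors, which is nonzero because one factor has roots of negative and the other of positive real part (your common-root computation shows the same; the condition $f>a$ is not the relevant one), and injectivity holds because the splitting of the roots by sign of real part is unique.

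One slip to correct: you have interchanged $\mathcal{R}^{\dagger}_0$ and $\mathcal{R}^{\dagger}_2$. The index counts complex conjugate pairs, so $\mathcal{R}^{\dagger}_0=\{\delta_1>0,\,\delta_2>0\}$ (four real roots) and $\mathcal{R}^{\dagger}_2=\{\delta_1<0,\,\delta_2<0\}$; accordingly $T_0=(x-2)^2(x+1)^2$ is a witness for $\mathcal{R}^{\dagger}_0$, not $\mathcal{R}^{\dagger}_2$, and only after a small perturbation splitting its double roots, since $T_0$ itself lies on the discriminant. Because your argument treats the four sign combinations symmetrically, this is a relabelling rather than a gap. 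You should also add a one-line witness for the non-emptiness of $\mathcal{L}_{\pm}$ (e.g.\ $T_0$ lies in both), since the dimension statement presupposes it.
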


We show below that the sets $\mathcal{L}_+$ and $\mathcal{L}_-$ intersect transversally. 

\begin{proof}
  Part (1) follows from the answer to Problem~\ref{prob1} for $d=4$
  (see Problem~A in \cite{KoDeGr})
  and from \cite[Theorem~1]{KoGod107}.

  Part (2). Any polynomial of the set $\mathcal{L}_+$
  (resp. $\mathcal{L}_-$) is of the
  form $L_+:=(x^2+ax+b)(x-f/2)^2$ (resp. $L_-:=(x+a/2)^2(x^2-fx+g)$). Hence

  $$L_-=x^4+(a-f)x^3+(g-af+a^2/4)x^2+a(g-af/4)x+a^2g/4~.$$
  Thus $\mathcal{L}_-~=~\{ ~(a,f,g)~|~0<a<f~,~g-af+a^2/4<0~,~g-af/4>0~\}$, i.~e.

  $$\mathcal{L}_-~=~\{ ~(a,f,g)~|~0<a<f~,~af/4<g<af-a^2/4~\}~.$$
  As $f/4<f-a/4$ (this follows from $f>a>0$), the set $\mathcal{L}_-$ is fibered
  over the open sector $\{ 0<a<f\} \subset Oaf$ the fibres being open intervals. This
  proves the proposition for the set $\mathcal{L}_-$. As for $\mathcal{L}_+$,
  one obtains the parametrisation

  $$L_+=x^4+(a-f)x^3+(b-af+f^2/4)x^2+(-f)(b-af/4)x+f^2b/4~,$$
  so $\mathcal{L}_+~=~\{ ~(a,f,g)~|~0<a<f~,~0<b<\min (af-f^2/4,af/4)~\}$.
  The latter
  minimum is positive if and only if $af-f^2/4>0$, i.~e. $a>f/4$. Thus

  $$\mathcal{L}_+~=~\{ ~(a,f,g)~|~f/4<a<f~,~0<b<\min (af-f^2/4,af/4)~\}$$
  and the set $\mathcal{L}_+$ is fibered over the sector $\{ f/4<a<f\}$
  the fibres being intervals. 
  
  \end{proof}

\begin{prop}\label{propintersec}
  (1) The intersection $\mathcal{M}:=\mathcal{L}_+\cap \mathcal{L}_-$
  is locally a smooth
  analytic subset of $\mathcal{R}^{\dagger}$ of dimension~$2$.
  \vspace{1mm}

  (2) The intersection of the sets $\mathcal{L}_+$ and $\mathcal{L}_-$
  is locally transversal.
\end{prop}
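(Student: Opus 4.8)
\textbf{Proof proposal for Proposition~\ref{propintersec}.}

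The plan is to obtain an explicit description of $\mathcal{M}=\mathcal{L}_+\cap\mathcal{L}_-$ as the set of degree~$4$ monic polynomials having \emph{both} a double positive and a double negative root, i.e.\ polynomials of the form $R:=(x+a/2)^2(x-f/2)^2$ with $a>0$, $f>0$. Expanding, $R=x^4+(a-f)x^3+\bigl(a^2/4-af/4+f^2/4 - af/2\bigr)x^2+\cdots$; more cleanly, writing $R=\bigl((x+a/2)(x-f/2)\bigr)^2=(x^2+\tfrac{a-f}{2}x-\tfrac{af}{4})^2$, one reads off the coefficients as polynomial functions of the two parameters $(a,f)$ ranging over the open sector $\{a>0,\ f>0\}\subset Oaf$. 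I would first check that this parametrised surface indeed lies in the orthant $\mathcal{R}^{\dagger}$ (sign pattern $\Sigma_{1,2,2}$): the coefficient of $x^3$ is $a-f$ and of $x^0$ is $a^2f^2/16>0$, so the sector must be further restricted by $a<f$ (compare with the sign conditions already derived for $\mathcal{L}_{\pm}$ in the previous proposition, both of which force $0<a<f$); one then verifies, as in those computations, that the remaining coefficients have the required signs on $\{0<a<f\}$. This gives Part~(1): the map $(a,f)\mapsto R$ is an analytic immersion (its Jacobian with respect to $(a,f)$, computed from the coefficients of $x^3$ and, say, $x^0$, is easily seen to have rank~$2$), so $\mathcal{M}$ is locally a smooth analytic $2$-dimensional submanifold of $\mathcal{R}^{\dagger}$.

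For Part~(2), transversality of $\mathcal{L}_+$ and $\mathcal{L}_-$ at a point of $\mathcal{M}$ means that the tangent spaces $T\mathcal{L}_+$ and $T\mathcal{L}_-$ (each of codimension~$1$ in the $4$-dimensional ambient space $Ob_0b_1b_2b_3$) together span the full tangent space, equivalently that their two conormal directions are linearly independent. The hypersurface $\mathcal{L}_+$ is the locus where $Q_4$ has a double root equal to some $f/2>0$; at such a point the conormal to $\mathcal{L}_+$ is the gradient (in $b^*$) of the resultant-type condition, which is classically proportional to the vector of monomials $(1,r,r^2,r^3)$ evaluated at the double root $r=f/2$ (this is the standard fact that the tangent hyperplane to the discriminant at a polynomial with double root $r$ is $\{\dot Q(r)=0\}$, where $\dot Q=\sum\dot b_j x^j$). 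Likewise the conormal to $\mathcal{L}_-$ at a point of $\mathcal{M}$ is proportional to $(1,s,s^2,s^3)$ with $s=-a/2<0$ the double negative root. Since $r\neq s$ (one is positive, one negative), the two conormal vectors are two distinct rows of a $2\times 4$ Vandermonde matrix with distinct nodes, hence linearly independent; therefore the intersection is transversal at every point of $\mathcal{M}$.

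The one genuine technical point — and the main thing requiring care rather than the main obstacle — is the identification of the conormal direction to $\mathcal{L}_{\pm}$ with the Vandermonde vector $(1,r,r^2,r^3)$. This is exactly the infinitesimal statement that a deformation $Q_4+\varepsilon\dot Q$ keeps the double root $r$ to first order iff $\dot Q(r)=0$: differentiating $Q_4(r(\varepsilon),\varepsilon)=0$ and $Q_4'(r(\varepsilon),\varepsilon)=0$ in $\varepsilon$ and using $Q_4'(r)=0$ at $\varepsilon=0$ yields $\dot Q(r)=0$ with no constraint from the second equation, so the tangent hyperplane to $\mathcal{L}_+$ (a locally smooth analytic hypersurface by the previous proposition) is indeed $\{\dot b_0+\dot b_1 r+\dot b_2 r^2+\dot b_3 r^3=0\}$. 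Granting this, everything reduces to the elementary non-vanishing of a $2\times 2$ Vandermonde-type minor $r^i s^j-r^j s^i$ for $r>0>s$, which is immediate. Hence Proposition~\ref{propintersec} follows.
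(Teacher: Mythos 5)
Your Part (1) takes essentially the paper's route: the paper parametrises $\mathcal{M}$ by $(x-r)^2(x+h)^2$ (one double positive, one double negative root) and identifies the parameter domain as an open sector, which in your variables is $0<a<f<(2+\sqrt{3})a$. One slip in your write-up: the claim that on $\{0<a<f\}$ ``the remaining coefficients have the required signs'' is not true. The coefficient of $x^2$ in $(x+a/2)^2(x-f/2)^2$ equals $(a^2-4af+f^2)/4$ (your expansion of this term is also off by $af/4$), and it is negative only under the additional restriction $f<(2+\sqrt{3})a$; for instance $a=1$, $f=10$ gives a positive middle coefficient, so that polynomial does not lie in $\mathcal{R}^{\dagger}$. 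This only shrinks the parameter domain to a smaller open sector — exactly the sector the paper exhibits — and does not affect the conclusion that $\mathcal{M}$ is a smooth analytic $2$-dimensional set; your explicit rank-$2$ Jacobian check is a useful detail the paper leaves implicit.

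For Part (2) you take a genuinely different route. The paper disposes of transversality by citing M\'eguerditchian's ``product lemma'', whereas you argue directly: the tangent hyperplane to the branch $\mathcal{L}_{\pm}$ of the discriminant at a point of $\mathcal{M}$ is $\{\dot b_0+\dot b_1\rho+\dot b_2\rho^2+\dot b_3\rho^3=0\}$, where $\rho$ is the corresponding double root (justified by the first-order computation $\dot Q(\rho)=0$, using $Q''(\rho)\neq 0$ and the smoothness of $\mathcal{L}_{\pm}$ from the preceding proposition), so the two conormals are the Vandermonde vectors $(1,r,r^2,r^3)$ and $(1,s,s^2,s^3)$ with $s<0<r$, which are independent since $r\neq s$. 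This is correct and self-contained, and in my view more informative than the bare citation; what the paper's reference buys is brevity and a statement applying in greater generality to products of discriminant strata.
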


\begin{proof}
  Part (1). Indeed, this intersection can be parametrised as follows:

  $$(x-r)^2(x+h)^2=x^4+2(r-h)x^3+(h^2-4rh+r^2)x^2+2rh(h-r)x+r^2h^2~.$$
  This polynomial defines the sign pattern $\Sigma_{1,2,2}$ if and only if
  $0<r<h$ and $h^2-4rh+r^2<0$, i.~e. $0<r<h<(2+\sqrt{3})r$ which is an
  open sector in $Orh$.

  Part (2) follows from \cite[``product lemma'', p.~52]{Me}. 
\end{proof}

\begin{rems}
  {\rm (1) Proposition~\ref{propintersec} implies that locally at $\mathcal{M}$,
    the sets

    $$
      \mathcal{M}~,~~~\, \mathcal{L}_+~,~~~\, \mathcal{L}_-~,~~~\, 
      \mathcal{R}_0^{\dagger}~,~~~\, 
      \mathcal{R}_{1,+}^{\dagger}~,~~~\, \mathcal{R}_{1,-}^{\dagger}~~~\,
              {\rm and}~~~\, \mathcal{R}_2^{\dagger}$$
      together
      are diffeomorphic to the direct products with $\mathbb{R}^2$
      of the subsets
  of $\mathbb{R}^2$}
$$\begin{array}{cccc}\{ (0,0)\}~,&\{ (x,0),x\in \mathbb{R}\}~,&
  \{ (0,y),y\in \mathbb{R}\}~,&
\{ x>0,y>0\}~,\\ \\ \{ x>0,y<0\}~,&\{ x<0,y>0\}&{\rm and}&
\{ x<0,y<0\}\end{array}$$
{\rm respectively.
\vspace{1mm}

(2) The set $\mathcal{R}^0$ of polynomials with vanishing coefficient of $x$
and with
signs of the coefficients $(+,-,-,0,+)$
constitute the common border of the orthants $\mathcal{R}$ and
$\mathcal{R}^{\dagger}$. In this set there are the following two subsets listed
with their parametrisations:

$$\begin{array}{ccccl}
  \mathcal{R}_{0,1}^0&:=&
  \overline{\mathcal{R}_{0,1}}\cap \overline{\mathcal{L}_-^{\dagger}}&:&
  Q_4^{*-}|_{g=af/4}\equiv L_-|_{g=af/4}=0~~~\, {\rm and}\\ \\
  \mathcal{R}_{1,2}^0&:=&\overline{\mathcal{R}_{1,2}}
  \cap \overline{\mathcal{L}_+^{\dagger}}&:&
  Q_4^{*+}|_{b=af/4}\equiv L_+|_{b=af/4}=0~.\end{array}$$
The set $\mathcal{R}_{0,1}^0$ (resp. $\mathcal{R}_{1,2}^0$) is the
common border of the sets}

$$\overline{\mathcal{R}_0}\cap \overline{\mathcal{R}_0^{\dagger}}~~~\, {\rm and}
~~~\, 
\overline{\mathcal{R}_1}\cap \overline{\mathcal{R}_{1,+}^{\dagger}}~~~\, 
         {\rm (resp.}~~~\,
         \overline{\mathcal{R}_1}\cap \overline{\mathcal{R}_{1,+}^{\dagger}}
         ~~~\, {\rm and}~~~\,
         \overline{\mathcal{R}_2}\cap
         \overline{\mathcal{R}_2^{\dagger}}~{\rm )~.}$$
         \end{rems}

\begin{figure}[htbp]
\centerline{\hbox{\includegraphics[scale=0.5]{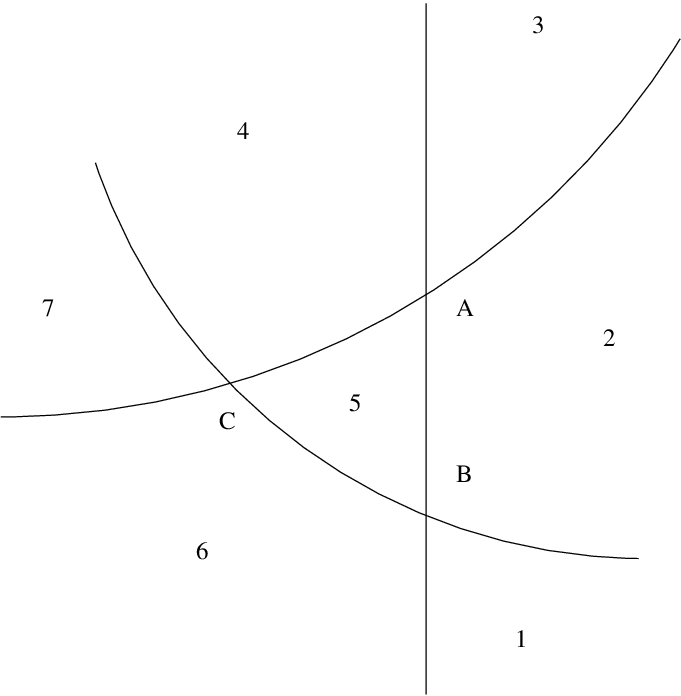}}}
\caption{The orthants $\mathcal{R}$ and $\mathcal{R}^{\dagger}$.}
\label{SAPdeg6}
\end{figure}

In Fig.~\ref{SAPdeg6} we represent schematically the orthants $\mathcal{R}$
to the right and $\mathcal{R}^{\dagger}$ to the left of the vertical line $AB$.
The latter line represents the set $\mathcal{R}^0$. The
numbers 1-7 indicate the following domains:

$$1.~\mathcal{R}_0~;~~~\, \, 2.~\mathcal{R}_1~;~~~\, \,
3.~\mathcal{R}_2~;~~~\, \, 
4.~\mathcal{R}_2^{\dagger}~;~~~\, \,   
5.~\mathcal{R}_{1,+}^{\dagger}~;~~~\, \, 6.~\mathcal{R}_0^{\dagger}~;~~~\, \, 
7.~\mathcal{R}_{1,-}^{\dagger}~.$$
The intersection points indicate the following sets:

$$A.~\mathcal{R}_{1,2}^0~;~~~\, \, \,
B.~\mathcal{R}_{0,1}^0~;~~~\, \, \, C.~\mathcal{M}~.$$
From this notation one readily deduces the meaning of the arcs in the figure.
The true dimensions of the indicated sets are obtained from the ones in the
figure by adding~$2$. E.g., the points $A$, $B$ and $C$ represent real
algebraic varieties of dimension~$2$.

\section{Proof of Theorem~\protect\ref{tmmain}\protect\label{secprtmmain}}

%

  Suppose that the SCP $S^{\diamond}$ is realizable. We denote by

  $$W:=(x+1)(x+a)(x+b)(x-f)(x-g)$$
  a strictly
  hyperbolic polynomial realizing its subsequence $S^{\diamond}_*$ (see Remarks~\ref{remsS}), with negative roots
  $-1<-a<-b$ and with positive roots $f<g$.
  The subsequence $S^{\diamond}_*$ defines the sign pattern
  $\Sigma_{1,4,1}$ which is canonical,
  so $f<b$ and $g>1$, see Theorem~\ref{tmcanon}. As
  $1+a+b-f-g<0$, one gets

  \begin{equation}\label{equgf}
    g>1+a+(b-f)>1+a>1>a>b>f~.
    \end{equation}
  We consider the primitive $M(x):=\int_{-1}^xW(t)dt$ of $W$. It has five critical
  points and five critical levels denoted by $\ell (\xi_j)$ which we assume
  distinct, where $\xi_1=-1$, $\xi_2=-a$, 
  $\ldots$, $\xi_5=g$. Clearly

  $$0=\ell (\xi_1)<\ell (\xi_2)>\ell (\xi_3)<\ell (\xi_4)>\ell (\xi_5)~.$$
  The SCP $S^{\diamond}$ is realizable if and only if there
  exists a real number $c$ such that $M+c$ has exactly two simple
  negative roots and no other real roots. It is impossible to have
  $\ell (\xi_5)>\max (\ell (\xi_1),\ell (\xi_3))$, because in this case one can
  choose $c$ such that $M+c$ has four negative roots and no other real roots.
  However the compatible couple $(\Sigma_{1,4,1},(0,4))$ is not realizable, see
  \cite[p.~24]{KoDeGr}.

  Suppose that the inequality  

  \begin{equation}\label{equ*}
    \ell (\xi_5)<\min (\ell (\xi_1),\ell (\xi_3))
  \end{equation}
  holds true, the polynomial $M+c$ defines the sign pattern
  $\Sigma_{1,4,2}$ and has two or four negative roots. Then $M+c$ has
  also two positive roots and does not realize the sequence $S$. 
  Hence to have $(p_6,n_6)=(0,2)$ is
  possible only if

  $$\min (\ell (\xi_1),\ell (\xi_3))<\ell (\xi_5)<
  \max (\ell (\xi_1),\ell (\xi_3))~.$$
  We prove that one necessarily has (\ref{equ*}) which means that
  the SCP $S^{\diamond}$
  is not realizable. Concretely, we show that

  \begin{equation}\label{equ2ineq}
    M(g)<0=M(-1)~~~\, {\rm and}~~~\, M(g)<M(-b)~.
  \end{equation}
  Using computer algebra (namely, MAPLE) one finds that

  $$\begin{array}{ccl}
    M(g)&=&(g+1)^3R/60~,\\ \\
    R&=&10abf-5abg+5afg-3ag^2+5bfg-3bg^2+3fg^2\\ \\
    &&-2g^3+5ab-5af+4ag-5bf+4bg-4fg\\ \\
    &&+3g^2-3a-3b+3f-3g+2~.\end{array}$$
  One computes directly

  $$M(1+a)|_{f=b}=-(2+a)^3a(a^2-3b^2+a+1)/12<0~.$$
  The inequality results from $1>a>a^2>b^2$. Next, we prove

  \begin{lm}
    One has $M^{\dagger}:=(\partial /\partial g)(M(g))<0$ for $f=b$, $g\geq 1+a$.
    Hence $M(g)|_{f=b}<0$ for $g\geq 1+a$. 
  \end{lm}

  \begin{proof}
    Indeed,

  $$\begin{array}{ccl}M^{\dagger}&=&(g+1)^2M^*/60~,\\ \\
    M^*&=&30abf-20abg+20afg-15ag^2+20bfg-15bg^2+15fg^2\\ \\
    &&-12g^3+10ab-10af+10ag-10bf+10bg-10fg\\ \\
    &&+9g^2-5a-5b+5f-6g+3~,\end{array}$$
  where
  $$\begin{array}{ccl}-M^*|_{g=1+a}&=&
    35a^2(b-f)+27a^3+47a^2-50abf-10fb\\ \\ &&+30a(b-f)+34a+10(b-f)+6>0~.
  \end{array}$$
  The inequality results from (\ref{equgf}). Thus $M^*|_{g=1+a}<0$ and
  $M^{\dagger}|_{g=1+a}<0$. One finds also
  that

  $$\begin{array}{ccl}\partial M^*/\partial g&=&
    -20a(b-f)-30ag+20bf-30g(b-f)\\ \\ &&
    -36g^2+10a+18g-6+10(b-f)<0\end{array}$$
    which follows from (\ref{equgf}). One concludes that $M^{\dagger}<0$
    for $f=b$, $g\geq 1+a$. 
\end{proof}
  For the derivative $\tilde{M}:=(\partial /\partial f)(M(g))$ one finds that

  $$60\tilde{M}=(g+1)^3(10ab+5a(g-1)+5b(g-1)+(3g^2-4g+3))~.$$
  Hence $\tilde{M}>0$ because $g>1$ and the discriminant
  of the quadratic polynomial is negative.
  Thus for $f\leq b$ and for $g=g_0\geq 1+a$ fixed,
  the quantity $M(g)$ is maximal
  for $f=b$. This means that $M(g)<0$ for
  $(f,g)\in (0,b]\times [1+a,\infty )$. This
    inequality implies $\ell (\xi_5)<\ell (\xi_1)$. 

    In a similar way one considers the difference

    $$M^{\diamond}:=M(g)-M(-b)=-(b+g)^3V/60~,~~~\, {\rm where}$$

    $$\begin{array}{ccl}V&=&3ab^2+5abf-4abg-5afg+3ag^2-2b^3-3b^2f\\ \\
      &&+3b^2g+4bfg-3g^2b-3fg^2+2g^3-5ab-10fa\\ \\
      &&+5ga+3b^2+5fb-4gb-5gf+3g^2~.\end{array}$$
    It is true that $V|_{g=1+a,f=b}$ is of the form

    $$5((a-b)^3+(4a-3b)(a-b)+4a+1-2ab-3b)>0~,$$
    see (\ref{equgf}), so $M^{\diamond}|_{g=1+a,f=b}<0$. One finds that

    $$\begin{array}{ccl}\partial V/\partial g&=&
      -4ab-5af+6ag+3b^2+4bf-6bg\\ \\ &&-6fg+6g^2+5a-4b-5f+6g~,\end{array}$$
    therefore

    $$\begin{array}{cclc}\partial V/\partial g|_{g=1+a}&=&
      (12a^2+12-10ab-11af)+3b^2&\\ \\ &&+4bf+(29a-10b-11f)>0&{\rm and}\\ \\ 
    \partial^2V/\partial g^2&=&6(a-b)+6(1-f)+12g>0~.&\end{array}$$
    This means that $\partial V/\partial g|_{g\geq 1+a,f=b}>0$,
    $V|_{g\geq 1+a,f=b}>0$ and $M^{\diamond}|_{g\geq 1+a,f=b}<0$.

    Then we consider the derivative

    $$\begin{array}{ccl}\partial V/\partial f&=&-(b+g)^3H/60~,\\ \\
      H&=&5a(b-g)-3b^2+4bg-3g^2-10a+5(b-g)~.\end{array}$$
    One has $H<0$, because $b<g$ and

    $$4bg-3g^2=g(4b-3g)\leq g(4b-3-3a)<0~.$$
    Hence $\partial V/\partial f>0$, 
    for $f\in (0,b]$ and $g=g_0\geq 1+a$ fixed, the quantity $M^{\diamond}$ takes
      its maximal value for $f=b$, so it is negative for
      $(f,g)\in (0,b]\times [1+a,\infty )$. 
      Thus $\ell (\xi_5)<\ell (\xi_3)$.



\begin{thebibliography}{Dillo 99}
\bibitem{AlFu} A.~Albouy and Y.~Fu, Some remarks about Descartes’ rule
  of signs. Elem. Math. 69 (2014), 186–194.
\bibitem{Ca} F. Cajori, A history of the arithmetical methods 
of approximation
to the roots of numerical equations of one unknown quantity,
Colo. Coll. Publ. Sci. Ser. 12 (7)(1910) 171-215.
\bibitem{CGK} H.~Cheriha, Y.~Gati and V.~P.~Kostov,
  Descartes' rule of signs, Rolle's
  theorem and sequences of compatible pairs, Studia Scientiarum
  Mathematicarum Hungarica 57:2 (2020) 165-186 June 2020
  DOI: https://doi.org/10.1556/012.2020.57.2.1463
\bibitem{Cu} D. R. Curtiss, Recent extensions of Descartes' rule
  of signs,  Annals of Mathematics. 19 (4)(1918) 251-278.
\bibitem{DG} J.-P. de Gua de Malves, D\'emonstrations de la R\`egle
  de Descartes,
Pour conno\^{\i}tre le nombre des Racines positives \& n\'egatives dans
les \'Equations qui n’ont point de Racines imaginaires, Memoires de
Math\'ematique et de Physique tir\'es des registres de l’Acad\'emie Royale
des Sciences (1741) 72-96.
\bibitem{Des}
  The Geometry of Ren\'e Descartes with a facsimile of the first edition,
  translated by D. E. Smith and M.L. Latham, New York, Dover Publications,
  (1954).
\bibitem{Fo} J.~Fourier, Sur l'usage du th\'eor\`eme de Descartes
 dans la recherche des limites des racines, Bulletin des sciences
 par la Soci\'et\'e philomatique de Paris (1820)156-165,181-187;
 {\oe}uvres 2, Gauthier-Villars (1890) 291-309.
\bibitem{Ga} C.~F.~Gauss, Beweis eines algebraischen Lehrsatzes, 
J. Reine Angew. Math. 3 (1-4) (1828); Werke 3, 67-70, 
G\"ottingen (1866).
\bibitem{Gr} D.~J.~Grabiner, Descartes’ rule of signs: another construction.
  Amer. Math. Monthly 106 (1999), 854–856.
\bibitem{J} J.~L.~W.~Jensen, Recherches sur la th\'eorie des \'equations,
  Acta Mathematica 36 (1913) 181-195.
\bibitem{KoDBAN07} V.~P.~Kostov, A realization theorem about
  $\mathcal{D}$-sequences,  
Comptes Rendus Acad. Sci. Bulgare, t. 60, No. 12 (2007) 1255-1258.
\bibitem{KoGod107} V.~P.~Kostov, Univariate polynomials and the
  contractibility of certain sets,
  Annual of Sofia University ``St. Kliment Ohridski'',
  Faculty of Mathematics
  and Informatics 107 (2020), 79-103.
\bibitem{KoDeGr} V.~P.~Kostov. Uni-variate Polynomials in Analysis.
  Hyperbolic Polynomials,
  Order of Moduli, and Descartes’ Rule of Signs. 101+VIII p.,
  Walter de Gruyter GmbH 2025
  ISBN:978-3-11-914355-4,
  eBook ISBN (e-ISBN (PDF)): 978-3-11-221885-3,
  e-ISBN (EPUB): 978-3-11-221912-6,
  ISSN 0938-6572, 
  https://doi.org/10.1515/9783112218853
\bibitem{La} E.~Laguerre, Sur la th\'eorie des \'equations num\'eriques,
  Journal de
  Math\'ematiques pures et appliqu\'ees 3 (9) (1883) 99-146;
  {\oe}uvres 1,
  Paris(1898) Chelsea, New-York (1972)3-47.
\bibitem{Me} I. M\'eguerditchian, G\'eom\'etrie du discriminant r\'eel
  et des polyn\^omes hyperboliques, Ph. D. Thesis, Univ. de Rennes I,
  Th\`ese soutenue le 24.01.1991.
\bibitem{Mes} B.~E.~Meserve, Fundamental Concepts of Algebra,
  Dover Publications, New York (1982).
\end{thebibliography}
\end{document}